\theoremstyle {plain}
\newtheorem {thm}{Theorem}[section]
\newtheorem {lem}[thm]{Lemma}
\theoremstyle {definition}
\newtheorem {defn}[thm]{Definition}
\newtheorem {alg}[thm]{Algorithm}
\theoremstyle {remark}
\newtheorem {exmp}[thm]{Example}
\DeclareMathOperator{\Ann}{Ann}
\DeclareMathOperator{\height}{ht}
\DeclareMathOperator{\LM}{LM}
\DeclareMathOperator{\LT}{LT}
\DeclareMathOperator{\LC}{LC}
\DeclareMathOperator{\lcm}{lcm}
\newcommand{\F}{{\mathbb F}}
\newcommand{\Q}{{\mathbb Q}}
\newcommand{\Z}{{\mathbb Z}}
\newcommand{\gen}[1]{\left\langle #1 \right\rangle}
\newcommand{\singular}{{\sc Singular }}
\begin{document}

\bibliographystyle{alpha}

\title{An Algorithm to Compute a Primary Decomposition of Modules in Polynomial Rings over the Integers}

\author{Nazeran Idrees}
\address{Nazeran Idrees\\ Department of Mathematics\\
GC University\\
Kotwali Road,Jinnah Town, Faisalabad 38000,Punjab\\
Faisalabad Pakistan\\}
\email{nazeranjawwad@gmail.com}

\author{Gerhard Pfister}
\address{Gerhard Pfister\\ Department of Mathematics\\ University of Kaiserslautern\\
Erwin-Schr\"odinger-Str.\\ 67663 Kaiserslautern\\ Germany}
\email{pfister@mathematik.uni-kl.de}
\urladdr{http://www.mathematik.uni-kl.de/$\sim$pfister}

\author{Afshan Sadiq}
\address{Afshan Sadiq, \\
Department of Mathematics, \\
Jazan University, \\
P.O. Box 114, Jazan, Saudia Arabia.}
\email{afshansadiq6@gmail.com}

\keywords{Gr\"obner bases, primary decomposition, Primary modules, Associated primes, Pseudo primary, Localization, Extraction}
\subjclass[2000]{Primary 13P99, 13E05;}

\date{\today}

\maketitle

\begin{abstract}
We present an algorithm to compute the primary decomposition of a submodule $\mathcal{N}$
of the free module $\Z[x_1, \ldots, x_n]^m$. For this purpose we use
algorithms for primary decomposition of ideals in the polynomial ring over the integers.
The idea is to compute first the minimal associated primes of $\mathcal{N}$, i.e. the minimal associated primes of the ideal
$\Ann(\Z[x_1, \ldots, x_n]^m /\mathcal{N})$ in $\Z[x_1,\ldots,x_n]$
and then compute the primary components using  pseudo-primary decomposition and  extraction, following the ideas of Shimoyama-Yokoyama. The algorithms are implemented in {\sc Singular}.
\end{abstract}

\section{Introduction}
Algorithms for primary decomposition in $\Z[x_1,\ldots,x_n]^m$ have been developed by
Seidenberg (cf. \cite{Se}) and Ayoub (cf. \cite{A}) and Gianni, Trager and Zacharias (cf. \cite{GTZ}).
The method of Gianni, Trager and Zacharias have been generalized by Rutman (\cite{R}) to a submodules of a free module.
In our paper we present a slightly different approach using pseudo--primary
decomposition, and the extraction of the primary components.
We use the computation of minimal associated primes of ideals in $\Z[x_1, \ldots, x_n]$ (cf. \cite{PSS}).
\\ Let us recall the primary decomposition for ideals in $\Z[x], \,\, x=(x_1, \ldots,x_n)$, since the ideas for submodules of $\Z[x]^m$ are similar.
The idea to compute the minimal associated prime ideals of an ideal $I\subseteq \Z[x]$ is the following.
We compute a Gr\"obner basis $G$ of $I$ (cf. Definition \ref{grobner}). $G\cap \Z$ generates $I\cap \Z$. If $I\cap \Z = \gen{a}$ and $a = p_1^{v_1} \cdot \ldots \cdot p_s^{v_s}$ the prime decomposition then we compute for all $i$ the minimal associated primes of $I\F_{p_i}[x]$, defined by the canonical map $\pi_i : \Z[x] \longrightarrow \F_{p_i}$. If $\overline{P}$ is a minimal associated prime of $I\F_{p_i}[x]$ then $\pi_i^{-1}(\overline{P})$ is a minimal associated prime of $I$. We obtain all minimal associated primes of $I$ in this way. If $I\cap \Z = \gen{0}$ then we consider the ideal $I\Q[x]$ and compute its minimal associated primes. If $\overline{P}\supset I\Q[x]$ is a minimal associated prime then $\overline{P}\cap \Z[x]$ is a minimal associated prime of $I$. Using the leading coefficients of a Gr\"obner basis of $I\Q[x]$ we find $h\in \Z$ such that $I\Q[x] \cap \Z[x] = I : h$ and $I = (I:h)\cap\langle I, h \rangle$.
The minimal associated primes of $\langle I, h\rangle$ can be computed as described above.
\\ If we know the minimal associated primes $B = \{P_1, \ldots, P_r\}$ of $I$ we can find knowing Gr\"obner bases of $P_i$ a set of separators $S = \{s_1, \ldots, s_r\}$ wiith the property $s_i \notin P_i$ and $s_i \in P_j$ for all $j\neq i$. Using the separators we find pseudo--primary ideals $Q_i = I:s_i^\infty=I:s_i^{k_i}$, i.e. the radical of $Q_i$ is prime (cf. Definition \ref{defpseudo}), $\sqrt{Q_i}=P_i$. We have
$$I=Q_1 \cap \ldots \cap Q_r \cap \langle I, s_1^{k_1}, \ldots s_r^{k_r}\rangle.$$
\\To find a primary decomposition we have to continue inductively with $\langle I, s_1^{k_1}, \ldots s_r^{k_r}\rangle$ and find from $Q_i$ the primary ideal of $I$ with associated prime $P_i$. This is given by so--called extraction lemma (the version for modules is Lemma \ref{lemExtraction}. If $Q_i = \overline{Q}_i\cap J$, $\overline{Q}_i$ primary and $\sqrt{\overline{Q}_i}=P_i$, $\height(J) > \height(P_i)$, then we can extract the $\overline{Q}_i$ using Gr\"obner bases with respect to special orderings.
\section{Gr\"obner basis for Modules}
Let $A$ be a principal ideal domain. Let $\mathcal{M}=A[x]^m$, $m>0$ be the free module over the polynomial ring over $A$, $x=\{x_1, \ldots, x_n\}$ and $\textbf{\textsf{e}}_1, \ldots, \textbf{\textsf{e}}_m$ the canonical basis of $\mathcal{M}$.
In this section we give basic results about Gr\"obner bases for modules in $\mathcal{M}$.

A monomial ordering $>$ is a total ordering on the set of monomials $\operatorname{Mon}_n\!=\!\{x^\alpha \!=x_1^{\alpha_1}\cdot \ldots \cdot x_n^{\alpha_n} | \alpha =(\alpha_1,\ldots ,\alpha_n) \in \mathbb{N}^n\}$ in $n$ variables satisfying $$x^\alpha > x^\beta \Longrightarrow x^\gamma x^\alpha > x^\gamma x^\beta$$
for all $\alpha, \beta, \gamma  \in \mathbb{N}^n$. Further more $>$ must be a well--ordering. We extend the notion of monomial orderings to the free module $\mathcal{M}$. We call $x^\alpha \textbf{\textsf{e}}_i=(0, \ldots, x^\alpha, \ldots, 0)\in \mathcal{M}$.
\begin{defn}
Let $>$ be a monomial ordering on $A[x]$. A (module) monomial ordering or a module ordering on $\mathcal{M}$ is a total ordering $>_m$ on the set of monomials $\{x^\alpha \textbf{\textsf{e}}_i | \alpha \in \mathbb{N}^n, \,\, i=1 \ldots m\}$, which is compatible with the $A[x]$-module structure including the ordering $>$, that is, satisfying
\begin{itemize}
\item[1.] $x^\alpha \textbf{\textsf{e}}_i >_m x^\beta \textbf{\textsf{e}}_j \Longrightarrow x^{\alpha + \gamma} \textbf{\textsf{e}}_i >_m x^{\beta + \gamma} \textbf{\textsf{e}}_j$,
\item[2.] $x^\alpha > x^\beta \Longrightarrow x^\alpha \textbf{\textsf{e}}_i >_m x^\beta \textbf{\textsf{e}}_i$,
\end{itemize}
for all $\alpha, \beta, \gamma \in \mathbb{N}^n, i, j = 1, \ldots, r$.
\end{defn}
Two module ordering are of particular interest:
$$x^\alpha \textbf{\textsf{e}}_i > x^\beta \textbf{\textsf{e}}_j : \Longleftrightarrow i < j \,\, \text{or} \,\,(i = j \,\, \text{and} \,\, x^\alpha > x^\beta),$$
giving priority to the component, denoted by $(c, >)$, and
$$x^\alpha \textbf{\textsf{e}}_i > x^\beta \textbf{\textsf{e}}_j : \Longleftrightarrow x^\alpha > x^\beta \,\, \text{or} \,\,(x^\alpha = x^\beta \,\, \text{and} \,\, i < j),$$
giving priority to the monomial in $A[x]$, denoted by $(>, c)$.

Now fix a module ordering $>_m$ and denote it also with $>$. Since any vector $f\in \mathcal{M}\setminus \{0\}$ can be written uniquely as $$f=cx^\alpha \textbf{\textsf{e}}_i + f^*$$ with $c\in A\setminus \{0\}$ and $x^\alpha \textbf{\textsf{e}}_i > x^{\alpha^*} \textbf{\textsf{e}}_j$ for every non-zero term $c^* x^{\alpha^*} \textbf{\textsf{e}}_j$ of $f$ we can define as $\LM(f) := x^\alpha \textbf{\textsf{e}}_i$, $\LC(f) := c$, $\LT(f) := c x^\alpha \textbf{\textsf{e}}_i$ and call it the leading monomial, leading coefficient and leading term \footnote{If we want to hint that we consider $f$ in the ring $A[x]$ we write for the leading term $\LT_{A[x]}(f)$.}, respectively, of $f$. Moreover, for $G\subset \mathcal{M}$ we call $$L_>(G) := L(G) := \langle \LT(g) \,\, | \,\, g\in G\setminus \{0\}\rangle_{A[x]} \subset \mathcal{M}$$ the leading submodule of $\langle G \rangle$. In particular, if $\mathcal{N}\subset \mathcal{M}$ is a submodule, then $L_>(\mathcal{N})=L(\mathcal{N})$ is called the leading module of $\mathcal{N}$.
\begin{defn}\label{grobner}
Let $\mathcal{N}\subset \mathcal{M}$ be a submodule. A finite set $G\subset \mathcal{N}$ is called a Gr\"obner basis of $N$ if and only if $L(G) = L(\mathcal{N})$. $G$ is called a strong Gr\"obner bases of $\mathcal{N}$, if for any $f\in \mathcal{N}\backslash \{0\}$ there exists $g\in G$ such that $\LT(g)$ divides $\LT(f)$.
\end{defn}
Strong Gr\"obner bases always exist over $A[x]$ (cf. \cite{WP}). If $A$ is not a principal ideal domain then this is not true in general.
\\The concept of a normal form with respect to a given system of elements in $\mathcal{M}$ is the basis of the theory of Gr\"obner bases. We explain this by giving an algorithm.
\\ For terms $ax^\alpha \textbf{\textsf{e}}_i$ and $b x^\beta \textbf{\textsf{e}}_k$, $a, b \in A$, we say $a x^\alpha \textbf{\textsf{e}}_i$ divides $b x^\beta \textbf{\textsf{e}}_k$ and write $a x^\alpha \textbf{\textsf{e}}_i \mid b x^\beta \textbf{\textsf{e}}_k$ if and only if $i=k$, $a \mid b$ and $x^\alpha \mid x^\beta$.
\begin{alg}
\textsc{NF($f | S$)} \label{algPrimdecZ}
\begin{algorithmic}
\REQUIRE $S = \{f_1, \ldots, f_m\}\subseteq \mathcal{M}$, $f\in \mathcal{M}$.
\ENSURE  $r\in \mathcal{M}$ the normal form $NF(f|S)$ with the following properties: $r=0$ or no monomial of $r$ is divisible by a leading monomial of an element of $S$. There exist a representation (standard representation) $f=\sum_{i=1}^m \xi_i f_i + r$, $\xi_i \in A[x]$ such that $\LM(f) \geq \LM(\xi_i f_i)$.
\vspace{0.1cm}
\IF{$f=0$}
\RETURN$f$;
\ENDIF
\STATE $T:= \{g \in S\,\, , \,\, \LT(g) \mid \LT(f)\}$;
\WHILE{$(T \neq \emptyset \,\, \text{and} \,\, f\neq 0)$}
\STATE choose $g\in T,\,\, \LT(f) = h \LT(g)$;
\STATE $f := f - hg$;
\STATE $T:= \{g\in S \,\, , \LT(g) \mid \LT(f)\}$;
\ENDWHILE
\IF{$f=0$}
\RETURN$f$;
\ENDIF
\RETURN$(\LT(f) + NF(f - \LT(f) | S))$;
\end{algorithmic}
\end{alg}

\section{Primary Decomposition for Modules} \label{secBasDefRes}
 First we introduce the notion of a pseudo--primary submodule and show how to decompose a module as intersection of pseudo--primary modules. Then we show how to extract the primary decomposition from a pseudo--primary module.

\begin{defn}\label{defpseudo}
An ideal $I$ of $\Z[x]$ is called a pseudo primary ideal if $\sqrt{I}$ is a prime ideal.
 A submodule $\mathcal{N}\subset \mathcal{M}$ is called a pseudo primary resp. primary submodule of $\mathcal{M}$ if $\Ann(\mathcal{M}/\mathcal{N})$ is a
pseudo primary resp. primary ideal of $\Z[x]$.
\end{defn}

\begin{defn}
Let $\mathcal{N}$ be a submodule of $\mathcal{M}$ and let
$B=\{P_1,P_2,\ldots,P_r\}$ be the set of minimal associated primes. A set $S=\{s_1, \ldots, s_r\}$ is called a system of separators for $B$ if $s_i\notin P_i$ and $s_i\in P_j$ for $j\neq i$.
 \end{defn}

\begin{lem}[Pseudo--Primary Decomposition] \label{lemIntPrimaryComp}
Let $\mathcal{N}\subseteq\mathcal{M}$ be submodule, $B=\{P_1, \ldots, P_r\}$ be the set of minimal associated primes and $S=\{s_1, \ldots, s_r\}$ a system of separators for $B$. Let $$Q_i = N: s_i^{\infty}= N: s_i^{k_i}$$ then $Q_i$ is a pseudo--primary submodule and
$$\mathcal{N}=Q_1\cap \ldots \cap Q_r\cap \langle \mathcal{N}+ \langle s_1^{k_1}, \ldots , s_r^{k_r}\rangle \mathcal{M} \rangle.$$
\end{lem}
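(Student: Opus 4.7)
My plan is to argue on a minimal primary decomposition $\mathcal{N} = \bigcap_{P \in \Ass(\mathcal{M}/\mathcal{N})} \tilde{Q}_P$, in which each $\tilde{Q}_P$ is $P$-primary (possibly with $P$ embedded), and to translate everything into statements about the primary components. The key computational input is that, for any $s \in \Z[x]$, one has $\tilde{Q}_P : s^\infty = \tilde{Q}_P$ if $s \notin P$ (since $s$ is then a non-zero-divisor on $\mathcal{M}/\tilde{Q}_P$) and $\tilde{Q}_P : s^\infty = \mathcal{M}$ if $s \in P$ (since some power of $s$ annihilates the finitely generated module $\mathcal{M}/\tilde{Q}_P$). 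Intersecting yields
\[
Q_i \;=\; \mathcal{N}:s_i^{\infty} \;=\; \bigcap_{P \in \Ass,\; s_i \notin P} \tilde{Q}_P.
\]

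The pseudo-primary claim follows at once: every associated prime of $\mathcal{M}/Q_i$ lies among those $P \in \Ass$ with $s_i \notin P$, and each such $P$ contains some minimal associated prime $P_j$; from $s_i \notin P \supseteq P_j$ and the separator property $s_i \in P_l$ for $l \neq i$ we get $P_j = P_i$, so every prime of $\Ass(\mathcal{M}/Q_i)$ contains $P_i$, whence $\sqrt{\Ann(\mathcal{M}/Q_i)} = P_i$.

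The inclusion $\subseteq$ in the displayed decomposition is immediate, so I focus on $\supseteq$. I would first enlarge each $k_i$ — which preserves $Q_i = \mathcal{N}:s_i^{k_i}$, since any sufficiently large integer satisfies the stabilization — so that additionally $s_i^{k_i}\mathcal{M} \subseteq \tilde{Q}_P$ for every $P \in \Ass$ with $s_i \in P$; this is possible because $\Ass$ is finite and $s_i \in \sqrt{\Ann(\mathcal{M}/\tilde{Q}_P)}$ whenever $s_i \in P$. Given $f$ in the right-hand side, write $f = n + \sum_i s_i^{k_i} m_i$ with $n \in \mathcal{N}$ and check $f \in \tilde{Q}_P$ for every $P \in \Ass$ in two cases: if some $s_i \notin P$, then $f \in Q_i \subseteq \tilde{Q}_P$; otherwise every $s_i \in P$, so each $s_i^{k_i}m_i \in \tilde{Q}_P$ by the choice of $k_i$ and $n \in \mathcal{N} \subseteq \tilde{Q}_P$, giving $f \in \tilde{Q}_P$. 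The main obstacle is precisely the second case: an embedded associated prime that contains every separator is not detected by any single $Q_i$, and it is exactly the enlargement of the exponents $k_i$ that forces $\sum_i s_i^{k_i}m_i$ into each such $\tilde{Q}_P$.
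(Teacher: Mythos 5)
Your proof of the pseudo-primary claim is correct and clean: realizing $Q_i = \mathcal{N}:s_i^\infty$ as $\bigcap_{P \in \Ass,\ s_i \notin P}\tilde{Q}_P$ and then using the separator property to force $P_i \subseteq P$ for every surviving $P$ is exactly what is needed. This part differs from the paper's terse argument (localization at $s_i$) only cosmetically.

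The decomposition formula, however, has a genuine gap, and you yourself point to its location. The lemma asserts the equality for \emph{the} exponent $k_i$ witnessing saturation, i.e.\ the index used by the algorithm (\textsc{modpseudoprimdecZ} computes ``the index of the saturation''). You replace each $k_i$ by a possibly larger $K_i$ chosen so that $s_i^{K_i}\mathcal{M} \subseteq \tilde{Q}_P$ whenever $s_i \in P$. This does not lose the identity $Q_i = \mathcal{N}:s_i^{K_i}$, but it shrinks the remainder module: $\mathcal{N} + \langle s_1^{K_1},\ldots,s_r^{K_r}\rangle\mathcal{M} \subseteq \mathcal{N} + \langle s_1^{k_1},\ldots,s_r^{k_r}\rangle\mathcal{M}$. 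Consequently your argument only establishes
\[
\mathcal{N} \;=\; Q_1\cap\cdots\cap Q_r\cap\bigl(\mathcal{N} + \langle s_1^{K_1},\ldots,s_r^{K_r}\rangle\mathcal{M}\bigr),
\]
which is the \emph{weakest} member of the family: equality for a smaller exponent implies it for larger exponents, but not conversely. To see that the needed $K_i$ can genuinely exceed the saturation index, take $I = \langle x\rangle\cap\langle y\rangle\cap\langle x,y\rangle^{N}$ in $k[x,y]$ with $s_1 = y$, $s_2 = x$: the saturation $I:y^k$ stabilizes at $k = N-1$, while $y^k \in \langle x,y\rangle^{N}$ requires $k \geq N$. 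So the step ``each $s_i^{k_i}m_i \in \tilde{Q}_P$ by the choice of $k_i$'' does not go through for the actual saturation index, and your Case 2 breaks down exactly when $P$ is an embedded prime containing every separator. The paper avoids this by an inductive application of the splitting lemma $\mathcal{N}' = (\mathcal{N}':s_t^{k_t})\cap(\mathcal{N}'+s_t^{k_t}\mathcal{M})$, using the observation $(\mathcal{N}:s_i^\infty):s_j^\infty = \mathcal{M}$ for $i\neq j$ to show $\mathcal{N}':s_t^{k_t} = Q_t$ with $\mathcal{N}' = \mathcal{N} + \langle s_1^{k_1},\ldots,s_{t-1}^{k_{t-1}}\rangle\mathcal{M}$; that route works at the true saturation exponent. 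To repair your direct argument you would need, for $f \in Q_1\cap\cdots\cap Q_r$ with $f = n + \sum s_i^{k_i}m_i$ and $P$ containing all $s_i$, to derive $f\in\tilde{Q}_P$ from the relations $s_i^{k_i}f\in\mathcal{N}$ together with the stabilization $\mathcal{N}:s_i^{k_i} = \mathcal{N}:s_i^{2k_i}$, rather than from an ad hoc divisibility of $s_i^{k_i}$ into $\Ann(\mathcal{M}/\tilde{Q}_P)$.
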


\begin{proof}
$\Z[x]_{s_i}\mathcal{N}\cap \mathcal{M}$ is pseudo--primary submodule with minimal associated prime $P_i$.
We obtain this module as a quotient $\mathcal{N}:s^{\infty}_i=\Z[x]_{s_i}\mathcal{N}\cap \mathcal{M}$. This proves that $Q_i$ is pseudo--primary

As in the case of ideals we have $\mathcal{N}=Q_1 \cap (\mathcal{N} + s_1^{k_1}\mathcal{M})$ (cf. \cite{GP}). Assume we have already \\$\mathcal{N}=Q_1 \cap \ldots \cap Q_{t-1} \cap (\mathcal{N}+\langle s_1^{k_1}, \ldots, s_{t-1}^{k_{t-1}}\rangle \mathcal{M})$, $t\leq r$ then
\\$\mathcal{N}=Q_1 \cap \ldots \cap Q_{t} \cap (\mathcal{N}+\langle s_1^{k_1}, \ldots, s_{t}^{k_t}\rangle \mathcal{M})$ since
\\$(\mathcal{N}+\langle s_1^{k_1}, \ldots, s_{t-1}^{k_{t-1}}\rangle \mathcal{M}): s_t^{k_t}= \mathcal{N}: s_k^{k_t}$.
\\The last equality hold since $(\mathcal{N}:s_i^{\infty}):s_j^{\infty}= \mathcal{M}$ if $i\neq j$.
\end{proof}

\begin{defn}
Let $I\subset \Z[x_1, \ldots, x_n]$ be a prime ideal. Let $I\cap \Z=\gen p$ and $\F_p$ the prime field of characteristic $p$.  A subset $$u\subset x = \{x_1, \ldots, x_n\}$$ is called an
independent set (with respect to $I$) if $I\F_p[x]\cap \F_p[u]=\gen 0$. An
independent set $u\subset x$ (with respect to $I$) is called a maximal if the number of elements is maximal\footnote{The number of elements in a maximal independent set $u$ for $I$ is the dimension of $\F_p[x] / I \F_p[x]$.}.
\end{defn}

\begin{lem}[Extraction Lemma] \label{lemExtraction}
Let $\mathcal{N}=Q\cap J$ be a pseudo--primary submodule of $\mathcal{M}$ with $\sqrt{\Ann(\mathcal{M}/Q)}=P$ and $Q$ be $P$--primary with
$\height(\Ann(\mathcal{M}/Q))$\\$<\height(\Ann(\mathcal{M}/J))$. Let $u\subset x$ be a maximal independent set for $P$. Let $P\cap \Z=\gen p$ and define $q:=\left\{\begin{array}{ll}
    1 \,\,\text{if}\,\, p=0 \\
    p \,\,\text{if}\,\, p>0
  \end{array}
\right.$. Let $A:= \Z[x]_{\gen{p}}$\footnote{We are treating the two cases $p=0$ or $p\neq 0$, together. If $p=0$ then $A=\F_p(u)=\Q(u)$ is a field.
If $p\neq 0$ then $A$ is a discrete valuation ring with residue field $\F_p(u)$. Especially we have in both cases the existence of strong Gr\"obner basis over $A[x\smallsetminus u].$}. Then the following holds:
\begin{enumerate}
\item[1.] $\mathcal{N} A[x \smallsetminus u]\cap \mathcal{M}=Q.$
\item[2.] Let $G$ be a strong Gr\"obner basis of $\mathcal{N}$ w.r.t. a block
           ordering satisfying $(x \smallsetminus u) \textbf{\textsf{e}}_i\gg u \textbf{\textsf{e}}_j$. Then $G$ is a strong Gr\"obner
           basis of $\mathcal{N}A[x\smallsetminus u]$ w.r.t. the induced ordering
           for the variables $x\smallsetminus u$.
\item[3.] Let $G$ be a strong Gr\"obner basis of $\mathcal{N}$ w.r.t. a block ordering satisfying
           $(x \smallsetminus u)\textbf{\textsf{e}}_i \gg u \textbf{\textsf{e}}_j$, $\LT_{A[x\smallsetminus u]}(g_i) =
           q^{\nu_i} a_i(x\smallsetminus u)^{\beta_i}\textbf{\textsf{e}}_j$ with $a_i\in \Z[u]\smallsetminus \gen p$
           for $i = 1,\ldots,k$, and $h= \lcm(a_1,\ldots,a_k)$. Then
           $\mathcal{N}A[x\smallsetminus u]\cap \mathcal{M}=\mathcal{N}:h^\infty$.
\end{enumerate}
\end{lem}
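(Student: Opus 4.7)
The plan for Part~1 is to reduce the equality to the identity $JA[x\smallsetminus u]=A[x\smallsetminus u]^m$. First I would observe that localizing by $\Z[u]\smallsetminus\langle p\rangle$ is the same as localizing away from $P$: any $f\in P\cap\Z[u]$ maps to zero in $\F_p[u]$ (or $\Q[u]$ if $p=0$) by independence, so $f\in\langle p\rangle$. Assuming $JA[x\smallsetminus u]=A[x\smallsetminus u]^m$, one has
\[
\mathcal{N}A[x\smallsetminus u]=QA[x\smallsetminus u]\cap JA[x\smallsetminus u]=QA[x\smallsetminus u],
\]
and since every element of $\Z[u]\smallsetminus\langle p\rangle$ lies outside $P=\sqrt{\Ann(\mathcal{M}/Q)}$ and is therefore a nonzerodivisor on $\mathcal{M}/Q$, one deduces $QA[x\smallsetminus u]\cap\mathcal{M}=Q$.

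The main technical obstacle is to show that every minimal prime $P'$ of $\Ann(\mathcal{M}/J)$ meets $\Z[u]\smallsetminus\langle p\rangle$. I would argue by contradiction: if $P'\cap\Z[u]\subseteq\langle p\rangle$, a case split on $P'\cap\Z$, combined with maximality of $u$ for $P$ and the Cohen--Macaulay equality $\height P+\dim\Z[x]/P=n+1$ in $\Z[x]$, forces $\height P'\le\height P$, contradicting $\height\Ann(\mathcal{M}/J)>\height P$. The case $P'\cap\Z=\langle p''\rangle$ with $p''\ne p$ is immediate, as then $p''\in P'\cap\Z[u]\not\subseteq\langle p\rangle$. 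The case $P'\cap\Z=\langle p\rangle$ makes $u$ an independent set for $P'$ in $\F_p[x]$, so $\dim\Z[x]/P'\ge|u|=\dim\Z[x]/P$. The case $P'\cap\Z=0$ uses a Noetherian descent (no nonzero element of $P'$ is divisible by all powers of $p$) to obtain $P'\cap\Z[u]=0$, so $u$ is independent for $P'\Q[x]$ and $\dim\Z[x]/P'\ge|u|+1\ge\dim\Z[x]/P$.

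For Part~2, given $f\in\mathcal{N}A[x\smallsetminus u]\smallsetminus\{0\}$, the plan is to pick $s\in\Z[u]\smallsetminus\langle p\rangle$ with $sf\in\mathcal{N}$ and apply the strong Gr\"obner property of $G$ in the block ordering to obtain some $g_j\in G$ with $\LT_{\Z[x]}(g_j)\mid\LT_{\Z[x]}(sf)$. Because the block ordering places $x\smallsetminus u$ above $u$, the leading $(x\smallsetminus u)$-exponents and the components of $sf$ and $f$ coincide, and the divisibility of the $\Z$-leading coefficients, together with $s$ being a unit in $A$, translates into $\LT_{A[x\smallsetminus u]}(g_j)\mid\LT_{A[x\smallsetminus u]}(f)$ in $A[x\smallsetminus u]^m$.

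For Part~3, the inclusion $\mathcal{N}:h^\infty\subseteq\mathcal{N}A[x\smallsetminus u]\cap\mathcal{M}$ is immediate since $h\in\Z[u]\smallsetminus\langle p\rangle$ is a unit in $A$. For the converse, given $f\in\mathcal{N}A[x\smallsetminus u]\cap\mathcal{M}$, I would reduce $f$ to zero by the strong Gr\"obner basis $G$ over $A[x\smallsetminus u]$ from Part~2. At each reduction step, clearing the leading coefficient $q^{\nu_j}a_j$ of the chosen $g_j$ contributes a denominator dividing a power of $a_j$ (the $q^{\nu_j}$ part remains in $A$); since every $a_j$ divides $h=\lcm(a_1,\ldots,a_k)$, the cumulative denominator divides a power of $h$, so $h^N f\in\mathcal{N}$ for some $N$ and hence $f\in\mathcal{N}:h^\infty$.
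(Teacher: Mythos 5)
Your proposal is correct and follows the same overall strategy as the paper for all three parts. For Part~2 the argument (pick $s\in\Z[u]\smallsetminus\langle p\rangle$ with $sf\in\mathcal{N}$, invoke the strong Gr\"obner property over $\Z[x]$, and observe that the block ordering makes the leading $(x\smallsetminus u)$-exponent and component of $sf$ agree with those of $f$ while $s$ is a unit in $A$) is exactly the paper's, differing only in that the paper spells out the bookkeeping of the $q$-adic valuations $\mu\le\tau\le\nu$ explicitly. For Part~3 both proofs reduce $f$ to $0$ by the strong Gr\"obner basis over $A[x\smallsetminus u]$ and observe that the only denominators introduced divide $\lcm(a_1,\ldots,a_k)=h$, hence $h^N f\in\mathcal{N}$.

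The only place where you organize things differently from the paper is Part~1. The paper sets $K=\sqrt{\Ann(\mathcal{M}/J)}$, passes once and for all to $\overline{K}=K\F_p[x]$ (or $K\Q[x]$), notes that $\overline K\supsetneq\overline P$ by the height hypothesis, and then invokes maximal independence of $u$ for $\overline P$ to conclude $\overline{K}\cap\F_p[u]\neq 0$, hence $K\cap(\Z[u]\smallsetminus\langle p\rangle)\neq\emptyset$ and $JA[x\smallsetminus u]=A[x\smallsetminus u]^m$. You instead reduce to showing that each minimal prime $P'$ of $\Ann(\mathcal{M}/J)$ meets $\Z[u]\smallsetminus\langle p\rangle$, and handle the cases $P'\cap\Z=\langle p''\rangle$ with $p''\neq p$, $P'\cap\Z=\langle p\rangle$, and $P'\cap\Z=0$ separately, each time deriving $\dim\Z[x]/P'\ge\dim\Z[x]/P$ from independence of $u$ plus the catenary/Cohen--Macaulay relation $\height P+\dim\Z[x]/P=n+1$ in $\Z[x]$. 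This is a somewhat more pedestrian, prime-by-prime version of the same dimension count; it makes explicit a few things the paper leaves implicit (for instance, that when $p>0$ every minimal prime of $K$ automatically satisfies $P'\cap\Z=\langle p\rangle$, while when $p=0$ the other two cases really can occur), at the cost of being longer. Both routes buy the same conclusion, and yours is arguably easier to check line by line; the paper's is more compact because it pushes the entire argument into the single quotient $\F_p[x]$ before doing dimension theory.
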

Before proving the lemma let us illustrate it by an example.
\\Consider the module \\$\mathcal{N}=\langle\left(
     \begin{array}{c}
       0 \\
       0 \\
       xy^2-x^2-xy \\
     \end{array}
   \right), \left(
              \begin{array}{c}
                0 \\
                y \\
                x \\
              \end{array}
            \right),\left(
                     \begin{array}{c}
                       0 \\
                       x \\
                       2xy-x \\
                     \end{array}
                   \right),\left(
                            \begin{array}{c}
                              x \\
                              0 \\
                              -xy \\
                            \end{array}
                          \right),\left(
                            \begin{array}{c}
                              0 \\
                              0 \\
                              18x \\
                            \end{array}
                          \right)\rangle, $ \\[1.5ex]
  \noindent with $\mathcal{N} \subseteq \Z[x.y]^3 = \mathcal{M}$.
\\Let us compute $\Ann(\mathcal{M}/\mathcal{N}) = \mathcal{N}:\mathcal{M}$.
\begin{verbatim}
ring R=integer,(x,y),lp;
module N=[0,0,xy2-x2-xy],[0,y,x],[0,x,2xy-x],[x,0,-xy],[0,0,18x];
ideal I=quotient(N,freemodule(nrows(N)));
I;
I[1]=18x
I[2]=xy2
I[3]=x2-2xy2+xy
\end{verbatim}
We can see that $P=\gen x$ is the only minimal prime associated to $I$. In this case obviously $u = \{y\}$ is the maximal independent set. We use the following ordering on $\Z[x,y]^3$:
\\$x^i y^ j \textbf{\textsf{e}}_k > x^l y^m \textbf{\textsf{e}}_j\,\, \text{if and only if} \,\, i > l \,\, \text{or}\,\, i=l,\,\, j>m\,\, \text{or}\,\, i=l,\,\, j=m \,\, \text{and}\,\, k<j$.
\\ Let us compute Gr\"{o}bner basis with respect to this ordering.
\begin{verbatim}
std(N);
_[1]=18y*gen(2)
_[2]=y3*gen(2)
_[3]=x*gen(1)+y2*gen(2)
_[4]=x*gen(2)-2y2*gen(2)+y*gen(2)
_[5]=x*gen(3)+y*gen(2)
\end{verbatim}
The Gr\"{o}bner basis is $G=\{\left(
     \begin{array}{c}
       0 \\
       18y \\
       0 \\
     \end{array}
   \right), \left(
              \begin{array}{c}
                0 \\
                y^3 \\
                0 \\
              \end{array}
            \right),\left(
                     \begin{array}{c}
                       x \\
                       y^2 \\
                       0 \\
                     \end{array}
                   \right),\left(
                            \begin{array}{c}
                              0 \\
                              x-2y^2+y \\
                              0 \\
                            \end{array}
                          \right),\left(
                            \begin{array}{c}
                              0 \\
                              y \\
                              x \\
                            \end{array}
                          \right)\}$ \\[1.5ex]
  \noindent Since $P\cap \Z = 0$ we have $\Z[u]_{\gen p}= \Q(y)$. $\mathcal{N}\Q(y)[x]$ is generated by $G$ which can be simplified to
  $\left(
     \begin{array}{c}
       0 \\
       1 \\
       0 \\
     \end{array}
   \right), \left(
              \begin{array}{c}
                x \\
                0 \\
                0 \\
              \end{array}
            \right),\left(
                     \begin{array}{c}
                       0 \\
                       0 \\
                       x \\
                     \end{array}
                   \right)$ since $18y$ is a unit in $\Q(y)$. This implies that $\mathcal{N}\Q(y)[x] \cap \Z[x,y]^3=  \langle \left(
     \begin{array}{c}
       0 \\
       1 \\
       0 \\
     \end{array}
   \right), \left(
              \begin{array}{c}
                x \\
                0 \\
                0 \\
              \end{array}
            \right),\left(
                     \begin{array}{c}
                       0 \\
                       0 \\
                       x \\
                     \end{array}
                   \right)\rangle$.
\\ We can see this also directly if we compute the Gr\"{o}bner basis of $\mathcal{N}\Q(y)[x]$ over $\Q(y)[x]$:
\begin{verbatim}
ring S=(0,y),x,lp;
module N=[0,0,x*y2-x2-x*y],[0,y,x],[0,x,2x*y-x],[x,0,-x*y],[0,0,18x];
std(N);
_[1]=gen(2)
_[2]=x*gen(1)
_[3]=x*gen(3)
\end{verbatim}
If we consider the leading terms $\LT_{\Z[x]_{\gen p}[x\smallsetminus u]}$ of $G$ we obtain $18y\textbf{\textsf{e}}_2, \,\, y^3\textbf{\textsf{e}}_2, \,\, x\textbf{\textsf{e}}_1, \,\, x\textbf{\textsf{e}}_2, \,\, x\textbf{\textsf{e}}_3$, i.e. $a_1=18y, \,\, a_2=y^3, \,\, a_3=1, \,\, a_4=1, \,\, a_5=1$ and we obtain $h=\lcm(a_1, a_2, a_3, a_4, a_5)= 18y^3$. Let us compute $\mathcal{N}:18y^3$:
\begin{verbatim}
setring R;
quotient(N,18y3);
_[1]=gen(2)
_[2]=x*gen(1)
_[3]=x*gen(3)
 \end{verbatim}
We obtain again $\mathcal{N}\Q(y)[x]\cap \Z[x,y]^3$.
\\The computation shows that $\mathcal{N}=Q\cap J$ is pseudo--primary with \\$Q=\langle \left(
     \begin{array}{c}
       0 \\
       1 \\
       0 \\
     \end{array}
   \right), \left(
              \begin{array}{c}
                x \\
                0 \\
                0 \\
              \end{array}
            \right),\left(
                     \begin{array}{c}
                       0 \\
                       0 \\
                       x \\
                     \end{array}
                   \right)\rangle$ and $P=\gen x = \Ann(\mathcal{M}/Q)$. $J$ is computed in the example at the end of the paper.
\newpage
\begin{proof}of the lemma \ref{lemExtraction}
\begin{enumerate}
\item[(1)] Let $K=\sqrt{\Ann(\mathcal{M}/J)}$ and $\overline{K}=K\F_p[x]$ then\footnote{In case $p=0, \,\, \overline{K}$ is the extended ideal $K\Q[x]$. In case $p\neq 0, \,\, \overline{K}$ is the ideal induced by the canonical map $\Z[x] \longrightarrow \F_p[x]$.} $\overline{K}\supsetneq
           \overline{P}=P\F_p[x]$ since $\height(\Ann(\mathcal{M}/Q))< \height(\Ann(\mathcal{M}/J))$. This implies that $\overline{K}\cap\F_p[u]\neq \gen 0$ since
           $u \subset x$ is maximally independent for $\overline P$. Therefore $K\cap(\Z[u]\smallsetminus\langle p\rangle)\neq \gen 0$.
           Thus it holds $JA[x\smallsetminus u]=A[x\smallsetminus u]$. Finally,
           because $Q$ is primary, we obtain $\mathcal{N} A[x\smallsetminus u]\cap \mathcal{M}=
           Q A[x\smallsetminus u] \cap \mathcal{M}=Q$.

\item[(2)]  We have to prove that for every $h\in \mathcal{N}A[x\smallsetminus u]$ there exists $g\in G$ such that $\LT_{A[x\smallsetminus  u]}(g)\mid \LT_{A[x\smallsetminus  u]}(h)$.
    \\Let $h\in \mathcal{N} A[x\smallsetminus u]$. Choose $\eta\in \Z[u]\smallsetminus\langle p\rangle$ such that $\eta h\in \mathcal{N}$.
            As $h$ is a polynomial in $x \smallsetminus u$ with coefficients in $A$, the element $\eta h$ can be written
           as
           $$\eta h=q^\nu a(x\smallsetminus u)^\alpha \textbf{\textsf{e}}_i + (\text{terms in}(x\smallsetminus u)\textbf{\textsf{e}}_j
           \,\, \text{of smaller order})$$
           with $a\in \Z[u]\smallsetminus \langle p\rangle$.
            \\Since $G$ is a strong Gr\"obner basis of $\mathcal{N}\subset \mathcal{M}$ there exists a $g\in G$ such that
           $\LT_{\Z[x]}(g)\mid \LT_{\Z[x]}(\eta h)$.
           \\If $q\neq 1$ and $q^\tau$ is
           the maximal power of $q$ dividing the leading coefficient $\LC_{\Z[x]}(g)$ of $g$
           then $\tau\leq \nu$ because $\LT(g)$ divide
           $$\LT_{\Z[x]}(\eta h)=q^\nu \LT_{\Z[x]}(a)(x\smallsetminus u)^\alpha \textbf{\textsf{e}}_i.$$
           Now we can write $g$ as an element of $F_u[x\smallsetminus u]$ w.r.t.
           the corresponding ordering, i.e.
           $$g=q^\mu b(x\smallsetminus u)^\beta \textbf{\textsf{e}}_i+ (\text{terms in} (x\smallsetminus u)\textbf{\textsf{e}}_j \,\,\text{of smaller order})$$
           (by assumption $G$ is a strong Gr\"obner basis of $\mathcal{N}$ w.r.t. a block
           ordering satisfying $(x \smallsetminus u) \textbf{\textsf{e}}_i\gg u \textbf{\textsf{e}}_j$)  $b\in \Z[u]\smallsetminus \langle p\rangle$
            and $\mu\leq \tau \leq \nu$.
            \\By definition we have
            $$\LT_{A[x\smallsetminus u]}(g)=q^\mu b(x\smallsetminus u)^\beta \textbf{\textsf{e}}_i$$
            resp.
            $$\LT_{A[x\smallsetminus  u]}(h)=q^\nu \frac{a}{\eta}(x\smallsetminus u)^\alpha \textbf{\textsf{e}}_i$$
             and on the other hand it holds
           $$\LT_{\Z[x]}(g)=q^\mu \LT_{\Z[x]}(b)(x\smallsetminus u)^\beta \textbf{\textsf{e}}_i$$ resp.
           $$\LT_{\Z[x]}(\eta h)=q^\nu \LT_{\Z[x]}(a)(x\smallsetminus u)^\alpha \textbf{\textsf{e}}_i.$$
           Thus the assumption $\LT_{\Z[x]}(g) \mid \LT_{\Z[x]}(\eta h)$ implies $(x\smallsetminus u)^\beta \mid (x
           \smallsetminus u)^\alpha$ and consequently
           $\LT_{A[x\smallsetminus u]}(g) \mid \LT_{A[x \smallsetminus u]}(h)$.
\item[(3)] Obviously $(\mathcal{N}:h^\infty)\subset
           A[x\smallsetminus u]\mathcal{N}$.
           To prove the inverse inclusion let $f\in A[x\smallsetminus u]\mathcal{N}\cap \mathcal{M}$.
           This implies that $NF(f\mid G)=0$. But the normal form algorithm
           requires only to divide by the leading coefficients $\LC(g_i)\,\text{of}\,g_i\,\text{for} \,\,i=1,2,\ldots,k$.
           Hence we obtain a standard representation $f=\sum_{i=1}^kc_ig_i$ with $c_i\in \mathbb{Z}[x]_h$.
           Therefore $h^mf\in \mathcal{N}$ for some $m$. This proves $A[x\smallsetminus u]\mathcal{N}\cap \mathcal{M}\subset(\mathcal{N}:h^\infty)$.

\end{enumerate}
\end{proof}

\section{The algorithms} \label{secAlg}

In this section we present the algorithm to compute a primary decomposition
of a submodule of a free module in a polynomial ring over the integers by applying the results of section
\ref{secBasDefRes}.

\vspace{0.2cm}

The algorithm to a pseudo--primary component
is based on the Pseudo--Primary Lemma \ref{lemIntPrimaryComp}.
The algorithm to extract the primary component from the pseudo--primary component
is based on the Extraction Lemma \ref{lemExtraction}.

\begin{alg}
\textsc{modprimdecZ} \label{algPrimdecZ}
\begin{algorithmic}
\REQUIRE $F_\mathcal{N} = \{f_1, \ldots, f_k\}$, $\mathcal{N}=\langle F_{\mathcal{N}} \rangle \subseteq\Z[x]^m$.
\ENSURE  $K := \{(Q_1, P_1), \ldots, (Q_s, P_s)\}$, $\mathcal{N}=Q_1\cap \ldots \cap Q_s$
         irredundant primary decomposition with $P_i=\sqrt{Q_i}$.
\vspace{0.1cm}
\STATE $P:=\emptyset$ a list of primary decomposition
\STATE $K:=\emptyset$ a list of remaining elements
\STATE $L:= \{(\overline{Q}_1, P_1), \ldots, (\overline{Q}_r, P_r)\}:=\textsc{modpseudoprimdecZ}(\mathcal{N})$;
\FOR{$i=1, \ldots, r$}
\IF{$P_i\neq 0$}
\STATE compute $u_i$ a maximal independent set for $P_i$;
\STATE $(Q_i, h):= \textsc{modextractZ}((\overline{Q}_i,P_i),u)$;
\STATE $P:= P\cup (Q_i,P_i)$;
\STATE $K:= K\cup (\overline{Q}_i +h\Z[x]^{m})$;
\ELSE
\STATE $P:= P\cup (\overline{Q}_i, P_i)$;
\ENDIF
\ENDFOR
\FOR{$j=1, \ldots,$ size$(K)$}
\STATE $S:= \textsc{modprimdecZ}(K_j)$;
\STATE $P:= P\cup S$;
\ENDFOR
\RETURN $P$;
\end{algorithmic}
\end{alg}

\newpage
\begin{alg}
\textsc{modpseudoprimdecZ}\label{algPseudoZ}
\begin{algorithmic}
\REQUIRE $\mathcal{N}$ a submodule of the free module $\mathcal{M}$.
\ENSURE a list $R$ of pseudo--primary modules of $\mathcal{N}$ and their associated primes.
\vspace{0.1cm}
\IF{$\mathcal{N}=\mathcal{M}$}
\RETURN$\emptyset$
\ENDIF
\STATE $I:=\Ann(\mathcal{M}/\mathcal{N})$;
\IF{$\mathcal{N}=0$}
\RETURN$(\mathcal{N},0)$;
\ELSE
\STATE compute $B:=\{P_1, \ldots, P_r\}$, the set of minimal associated primes of $I$;
\STATE comute $\{s_1, \ldots, s_r\}$ a system of separators for $B$;
\FOR{$i=1, \ldots, r$}
\STATE compute the saturation $Q_i$ of $N$ w.r.t $s_i$ and the integer $k_i$, the index of the saturation.
\STATE $R:=\{(Q_1,P_1), \ldots, (Q_r,P_r)\}$;
\STATE $L=\textsc{modpseudoprimdecZ}(\mathcal{N}+\langle s_1^{k_1}, \ldots, s_r^{k_r}\rangle\Z[x]^m)$;
\ENDFOR
\ENDIF
\RETURN $R\cup L$;
\end{algorithmic}
\end{alg}
\begin{alg}
\textsc{modextractZ} \label{algExtractZ}
\begin{algorithmic}
\REQUIRE $K$ the list of a pseudo--primary module the corresponding minimal associated prime and
         $L$ is a list of maximal independent set $u$ for the prime ideal.
\ENSURE  The primary component $Q$ of $\mathcal{N}$ associated to $P$ and a polynomial $h$.
\vspace{0.1cm}
\STATE $I:= \Ann(\mathcal{M}/Q)$;
\STATE compute $G=\{g_1,\ldots,g_k\}$, a strong Gr\"obner basis of $\mathcal{N}$ w.r.t. a block
       ordering satisfying $x \smallsetminus u \gg u$;
\IF{$I\cap \Z=0$}
\STATE compute $\{a_1,\ldots,a_k\}$ such that
       $LC_{\Z(u)[x \smallsetminus u]}(g_i)= a_i$ with
       $a_i \in \Z[u]$;
\STATE compute $h=lcm(a_1,\ldots,a_k)$, the least common multiple of $a_1,\ldots,a_k$;
\ENDIF
\IF{$I\cap \Z=\langle p\rangle$, $p\neq 0$}
\STATE compute $\{a_1,\ldots,a_k\}$ such that
       $LC_{\Z[u]_{\langle p \rangle}[x \smallsetminus u]}(g_i)= p^{\nu_i} \cdot a_i$ with
       $a_i \in \Z[u] \smallsetminus \gen p$;
\STATE compute $h=lcm(a_1,\ldots,a_k)$, the least common multiple of $a_1,\ldots,a_k$;
\ENDIF
\STATE compute the saturation $Q$ of $\mathcal{N}$ w.r.t $h$ and $k$, the index of saturation.
\RETURN $(Q, h^k)$;
\end{algorithmic}
\end{alg}

\newpage

\section{Example} \label{secExTime}

We have implemented the algorithms (cf. section \ref{secAlg}) in \singular in the library \texttt{primdecint.lib} (cf. \cite{DGPS})

\begin{exmp} \label{ex1}
\begin{verbatim}

LIB"primdecint.lib";
ring R=integer,(x,y),(c,lp);
module N=[0,0,xy2-x2-xy],[0,y,x],[0,x,2xy-x],[x,0,-xy],[0,0,18x];
> pseudo_primdecZM(N);
[1]:
   [1]:
      _[1]=[0,0,18x]
      _[2]=[0,0,xy2]
      _[3]=[0,0,x2-2xy2+xy]
      _[4]=[0,y,x]
      _[5]=[0,x,2xy-x]
      _[6]=[x,0,-xy]
   [2]:
      _[1]=x
\end{verbatim}

\begin{verbatim}
> primdecZM(N);
[1]:
   [1]:
      _[1]=[0,0,x]
      _[2]=[0,1]
      _[3]=[x,0,-xy]
   [2]:
      _[1]=x
[2]:
   [1]:
      _[1]=[0,0,y3]
      _[2]=[0,0,18x]
      _[3]=[0,0,xy2]
      _[4]=[0,0,x2-2xy2+xy]
      _[5]=[0,y,x]
      _[6]=[0,x,2xy-x]
      _[7]=[y3]
      _[8]=[x,0,-xy]
   [2]:
      _[1]=y
      _[2]=x
      \end{verbatim}
The computation shows that the module $\mathcal{N}$ is pseudo--primary with minimal associated prime $\langle x\rangle$ it has an embedded component with associated prime $\langle x, y\rangle$.
      \end{exmp}
\section{Acknowledgement}
The authors would like to thank the reviewer for all the useful hints.

\end{document}